\newtheorem{theorem}{Theorem}[section]
\theoremstyle{definition}
\newtheorem{definition}[theorem]{Definition}
\newtheorem{example}[theorem]{Example}
\theoremstyle{remark}
\newtheorem{remark}[theorem]{Remark}
\numberwithin{equation}{section}
\begin{document}

\title{CR manifolds, k-contact manifolds, and generalized Sasakian structures}
%    Remove any unused author tags.

%    author one information
\author{Janet Talvacchia}
\address{Department of Mathematics and Statistics \\500 College Ave\\ Swarthmore, PA 19081}
\curraddr{}
\email{jtalvac1@swarthmore.edu}
\thanks{}

%    author two information
%\author{}
%\address{}
%\curraddr{}
%\email{}
%\thanks{}

%\subjclass[2010]{Primary }

\keywords{Generalized Geometry, Generalized Sasakian Structures, Generalized Contact Structures, CR Manifolds}

\date{August 24, 2024}

%\dedicatory{}

\begin{abstract}In {\cite {[T1]}}, a notion of a generalized Sasakian structure was introduced in the context of generalized contact geometry, the odd dimensional analogue of generalized complex geometry introduced by Hitchin and Gualtieri. We show that k-contact manifolds are generalized Sasakian if and only if they are classically Sasakian.  We show also that strictly pseudo-convex  CR manifolds are always generalized Sasakian.
\end{abstract}
\maketitle

\section{Introduction}\label{S:intro} 
Classically, we know that coK\"ahler  and Sasakian manifolds yield K\"ahler structures on the product manifold $M \times {\mathbb R}$ albeit via different constructions.  It is natural to ask what the analogs of these spaces would be in the generalized contact  geometry setting, the odd dimensional analog of the generalized complex geometry of Hitchin and Gualtieri. (See {\cite {[S], [AG], [PW], [H], [G1], [G2], [G3]} for background on generalized contact and generalized complex structures.) The notion of a generalized co-K\"ahler structure was introduced in \cite{[GT2]} and it was shown that the product of two generalized contact manifolds was generalized K\"aher if each of the generalized contact manifolds was in fact generalized co-K\"ahler.  The two commuting generalized complex structures in this case are constructed using a product construction.  More specifically, if $M_1$ and $M_2$ admit strong generalized contact metric structures $(\Phi_1, E_{+,1}, E_{-, 1}, G_1)$ and $(\Phi_2, E_{+,2}, E_{-,2}, G_2)$ with $[E_{\pm , i} , E_{\mp , i} ] = 0$ and such that $(G_1\Phi_1, G_1E_{+,1}, G_1E_{-, 1})$ and $(G_2\Phi_2, G_2E_{+,2}, G_2E_{-,2})$  are strong as well,  then the generalized complex structures $J_1 = \Phi_1 \times \Phi_2$ and $J_2 = G_1\Phi_1 \times G_2\Phi_2$ yield a generalized K\"ahler structure on $M_1 \times M_2$. (See  \cite {[GT2]}).  Sasakian, k-contact and CR manifolds fall outside of this categorization. In all these cases, even though one can construct a  strong generalized contact metric structure $(\Phi , E_{+}, E_{-}, G)$, the  corresponding structure $(G\Phi, GE_{+}, GE_{-,}, G)$ is never strong. In \cite {[T]}, it was shown that a notion of generalized Sasakian could not arise from a simple product construction.

A notion of generalized Sasakian structure was introduced in \cite{[T1]} such that if $M$ was generalized Sasakian, $M \times {\mathbb R}$ was generalized K\"ahler.  The commuting complex structures in this case are formed by first constructing a generalized complex structure $J_1 = \Phi_1 \times \Phi_2$ from strong generalized contact structures on $M$ and ${\mathbb R}$ while the second generalized complex structure $J_2$ is obtained by making a change of gauge of the underlying Poisson structure corresponding to $J_1$.  Classical Sasakian manifolds were shown to be generalized Sasakian and the set of generalized Sasakian and the set of generalized coK\"ahler spaces were shown to have no overlap. Thus there are two distinct ways to possibly get generalized K\"ahler structures on the product of two strong generalized contact manifolds.

In this paper, we consider where k-contact and CR manifolds fall in this landscape.  The two main theorems are the following:
\begin {theorem} A k-contact manifold is generalized Sasakian if and only if it is classically Sasakian.
\end{theorem}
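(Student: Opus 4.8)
The strategy is to prove the two implications separately, with essentially all the work in the forward direction. Recall the mechanism behind the notion of generalized Sasakian from \cite{[T1]}: starting from an underlying almost contact metric structure $(\phi,\xi,\eta,g)$ on $M$ one forms a strong generalized contact metric structure $(\Phi,E_{+},E_{-},G)$, with $E_{+}=\xi$, $E_{-}=\eta$, $\Phi$ built from $\phi$ and $d\eta$, and $G$ the generalized metric determined by $g$; pairing this with the tautological generalized contact structure on ${\mathbb R}$ gives a generalized complex structure $J_1=\Phi\times\Phi_{\mathbb R}$ on $M\times{\mathbb R}$, and $M$ is generalized Sasakian when $J_1$ is integrable and an appropriate $b$-field gauge transformation of the Poisson structure underlying $J_1$ produces a second generalized complex structure $J_2$ commuting with $J_1$ for which $(J_1,J_2)$ is generalized K\"ahler. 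On a k-contact manifold the relevant underlying structure is the canonical contact metric structure, for which $\xi$ is Killing, i.e. $h:=\tfrac12\mathcal{L}_\xi\phi=0$; as recorded in the introduction, this is exactly the setting in which the strong structure $(\Phi,E_{+},E_{-},G)$ is available. The first step is simply to write all of this data out in classical tensorial terms on $M$.

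The implication ``classically Sasakian $\Rightarrow$ generalized Sasakian'' requires nothing new: a classical Sasakian manifold is k-contact, since it satisfies $R(X,Y)\xi=\eta(Y)X-\eta(X)Y$ and hence $\xi$ is Killing, and \cite{[T1]} shows that every classical Sasakian manifold is generalized Sasakian. So the theorem reduces to showing that a k-contact manifold which is generalized Sasakian must be classically Sasakian.

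For this, I would transcribe the generalized-geometric conditions defining a generalized Sasakian structure into classical tensor identities on $M$ and read off the consequences. The key point is that on the tangent part $J_1$ restricts to the classical almost complex structure $J(X,f\partial_t)=(\phi X-f\xi,\ \eta(X)\partial_t)$ on $M\times{\mathbb R}$, so its Courant-integrability forces the normality tensor $N^{(1)}_\phi:=[\phi,\phi]+d\eta\otimes\xi$ to vanish; one then checks that the cotangent-valued components of the integrability of $J_1$, and the compatibility of the gauged $J_2$, impose no further constraint beyond what $N^{(1)}_\phi=0$ and $h=0$ already give. Since a contact metric manifold with $N^{(1)}_\phi=0$ is by definition Sasakian, equivalently $(\nabla_X\phi)Y=g(X,Y)\xi-\eta(Y)X$, this completes the argument, and the explicit $J_2$ of \cite{[T1]} is recovered along the way. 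I expect the main obstacle to be exactly this translation step: verifying that the full Courant-bracket integrability of $J_1$ together with the existence of a compatible gauge-transformed $J_2$ collapses, in the presence of $h=0$, to the single condition $N^{(1)}_\phi=0$ — the careful handling of the $T^*(M\times{\mathbb R})$-valued terms in the Courant bracket and of the $b$-field defining $J_2$ being where the computation is most delicate and sign and contraction errors are easiest to make.
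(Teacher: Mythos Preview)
Your overall strategy is sound, but it takes a considerably longer route than the paper and burdens itself with concerns that turn out to be irrelevant.  The paper never passes to $M\times\mathbb{R}$, never constructs $J_{1}$ or $J_{2}$, and never uses the invertibility condition on $I+e^{t}(d\eta)\pi_{M}$.  It works entirely on $M$ and uses only the fact that a generalized Sasakian structure is, by definition, a \emph{normal} generalized contact metric structure.  Normality means $\Phi$ is strong, so both maximal isotropics $L^{\pm}$ are Courant--closed, and hence so is their intersection $E^{(1,0)}=L^{+}\cap L^{-}$.  For the diagonal $\Phi$ attached to the contact metric structure one has $E^{(1,0)}=D^{(1,0)}\oplus (D^{(1,0)})^{*}$, and projecting the inclusion $[[E^{(1,0)},E^{(1,0)}]]\subset E^{(1,0)}$ onto the tangent summand yields $[D^{(1,0)},D^{(1,0)}]\subset D^{(1,0)}$ directly.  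The k-contact hypothesis is then invoked in the form established in Section~2, namely $[\xi,D^{(1,0)}]\subset D^{(1,0)}$, and the two involutivity conditions together are exactly the paper's characterization of Sasakian.

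Your route via the product almost complex structure $J$ on $M\times\mathbb{R}$ and the tensor $N^{(1)}_{\phi}$ would arrive at the same endpoint---indeed, integrability of the product $J_{1}$ is equivalent to normality of the generalized contact factors by the remark following Definition~3.3---but it forces you to verify that $J_{1}$ really is block-diagonal so that Courant integrability collapses to classical integrability of $J$, and to check the cotangent components separately.  The paper's eigenbundle argument on $M$ sidesteps all of that in two lines.  Your anticipated ``main obstacle'' (handling $J_{2}$ and the $b$-field) simply does not arise.  One small correction to your setup: for the k-contact case the relevant $\Phi$ is the block-diagonal endomorphism built from $\phi$ alone (Example~3.10 in the paper), not one involving $d\eta$.
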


\begin{theorem}
Any strictly pseudo-convex CR manifold is generalized Sasakian.
\end{theorem}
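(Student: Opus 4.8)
The plan is to present the canonical contact metric structure of a strictly pseudo-convex CR manifold as a strong generalized contact metric structure, and then to check — using the Tanno integrability tensor — that it satisfies the two integrability requirements in the definition of a generalized Sasakian structure from \cite{[T1]}. \emph{Setup.} Let $M^{2n+1}$ be strictly pseudo-convex with contact form $\eta$ and Reeb field $\xi$; extend the CR operator on $\ker\eta$ to an endomorphism $\phi$ of $TM$ with $\phi\xi=0$ and put $g=\tfrac12\,d\eta(\cdot,\phi\cdot)+\eta\otimes\eta$ (up to the usual normalization). Strict pseudo-convexity makes $g$ Riemannian, so $(\phi,\xi,\eta,g)$ is a contact metric structure, and by Tanno's theorem the integrability of the CR structure is equivalent to
\[
(\nabla_X\phi)Y=g(X+hX,Y)\,\xi-\eta(Y)\,(X+hX),\qquad h:=\tfrac12\mathcal L_\xi\phi,
\]
where $h$ is symmetric, $h\phi=-\phi h$, $h\xi=0$ and $\operatorname{tr}h=0$. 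The Sasakian case is exactly $h\equiv0$, so the substance of the theorem is that the torsion $h$ never obstructs the generalized Sasakian condition.

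\emph{The generalized structure.} Form $\Phi=\begin{pmatrix}\phi&0\\ 0&-\phi^{*}\end{pmatrix}$ on $TM\oplus T^{*}M$ with $E_{+}=\xi$, $E_{-}=\eta$, and let $\mathcal G$ be the generalized metric determined by $g$. Exactly as in the Sasakian verification of \cite{[T1]}, one checks the algebraic axioms of a strong generalized contact metric structure ($\Phi E_{\pm}=0$, the $\Phi^{2}$ identity, compatibility with the natural pairing and with $\mathcal G$); none of these uses $h=0$. I also anticipate needing to twist by a $B$-field two-form built from the torsion — a natural candidate being $\omega(X,Y)=g(hX,\phi Y)$ adjusted by $\eta$-terms so that $d\omega$ meshes correctly with $d\eta$ — in order to absorb the torsion in the integrability step; if the computation below shows no correction is required, this two-form is taken to be zero.

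\emph{Integrability.} By the construction in \cite{[T1]}, a generalized Sasakian structure requires (a) that the generalized complex structure $J_{1}=\Phi\times\Phi_{\mathbb R}$ on $M\times{\mathbb R}$ be Courant-integrable and (b) that the change of gauge $J_{2}$ of the underlying Poisson structure of $J_{1}$ (by the closed two-form used in the Sasakian case) commute with $J_{1}$ and form a generalized K\"ahler pair. I would expand the Courant-bracket vanishing conditions for $J_{1}$ in a local frame adapted to $\xi$, substitute Tanno's identity for every occurrence of $\nabla\phi$, and collect terms: the $h$-free part is precisely what already occurs in \cite{[T1]} and vanishes, while the remaining terms are polynomial in $h,\phi,\eta,d\eta$, and should cancel using $h^{\top}=h$, $h\phi=-\phi h$, $h\xi=0$, $\operatorname{tr}h=0$ (together with the $B$-field correction from the previous step, if present). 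Part (b) is then handled as in \cite{[T1]}, since $h$ acts only in the CR directions and so affects neither the Poisson bivector nor its gauge transform in a way that breaks compatibility.

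\emph{Conclusion.} This produces the required generalized Sasakian data, proving the theorem; putting $h\equiv0$ recovers the result of \cite{[T1]} that classical Sasakian manifolds are generalized Sasakian, and since there exist strictly pseudo-convex CR manifolds that are not Sasakian (e.g. suitable deformations of the standard CR structure on $S^{2n+1}$), the class of generalized Sasakian manifolds strictly contains the classical Sasakian ones. The main obstacle is the integrability step: tracking the torsion $h$ through the Courant-bracket computation and verifying that its contribution cancels — I expect the correct choice of the $B$-field two-form above to be dictated by exactly this cancellation.
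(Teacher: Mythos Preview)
Your proposal is a plan rather than a proof, and the plan is aimed at the wrong target. The definition of generalized Sasakian in \cite{[T1]} (Definition~3.12 here) requires that $(M,\Phi,E_+,E_-,G)$ be a \emph{normal} generalized contact metric structure for which $(I+e^{t}(d\eta)\pi_M)$ is invertible for every $t$, where $\pi_M$ is the canonical Poisson structure attached to the structure. The paper verifies precisely this: normality of the diagonal structure $\Phi=\left(\begin{smallmatrix}\phi&0\\0&-\phi^*\end{smallmatrix}\right)$, $E_+=\xi$, $E_-=\eta$ is obtained from the CR involutivity $[D^{(1,0)},D^{(1,0)}]\subset D^{(1,0)}$ (Example~3.10); the restriction of $\Phi$ to $(E_+\oplus E_-)^\perp$ is a classical complex structure, so the associated canonical Poisson structure is zero. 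The paper then applies the $B$-transform with $B=d\eta$; this is again normal, and it shifts the canonical Poisson structure to $\pi=(d\eta)^{-1}$, whence $(I+e^{t}d\eta\,\pi)=(1+e^{t})I$ is invertible for all $t$. That is the whole argument --- no Courant-bracket computation on $M\times\mathbb{R}$ and no torsion bookkeeping.

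By contrast, you attempt to establish directly the \emph{consequence} of being generalized Sasakian (that $M\times\mathbb{R}$ carries a generalized K\"ahler pair $(J_1,J_2)$) rather than the defining invertibility condition. Your proposed $B$-field, built from the Tanno torsion $h=\tfrac12\mathcal{L}_\xi\phi$, is not the one that matters; the relevant two-form is $d\eta$, and its role is to move the canonical Poisson structure, not to absorb $h$. The ``integrability step'' in which the $h$-contributions ``should cancel'' is never carried out, and in any case $h$ measures $[\xi,D^{(1,0)}]$ (the $k$-contact condition), whereas the strong/normal condition for this particular $\Phi$ rests on $[D^{(1,0)},D^{(1,0)}]\subset D^{(1,0)}$, which holds by the CR hypothesis. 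In short: work with the Poisson-invertibility definition after the $B=d\eta$ transform, and the Tanno machinery becomes unnecessary.
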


\section{Contact Metric Structures, k-contact structures and CR structures}
We begin with a review of k-contact structures and CR structures. We follow the classic texts due to Boyer and Galicki \cite{[BG]} and Blair \cite{[B]} in this exposition. Throughout this paper we let $M$ be a smooth manifold of dimension $2n+1$.

\begin{definition}  $M$ is a contact manifold if there exists a 1-from $\eta$ such that $\eta \wedge (d\eta)^n \neq 0$.  A contact  structure on $M$ is an equivalence class of such 1-forms where $\eta^\prime \sim  \eta$ if there exist a nowhere vanishing function $f$ on $M$ such that $\eta^\prime = f\eta$. 
\end{definition}

A contact structure gives rise to a co-dimension one sub-bundle of $TM$, $D= {\rm ker}\   \eta$ and there exists a unique vector field $\xi $ such that $\eta(\xi) = 1$ called the Reeb vector field.

\begin{definition}An almost contact structure on $M$ is a triple $(\phi, \xi, \eta)$ where $\phi$ is a $(1,1)$ tensor, $\xi$ is a vector field, and $\eta$ is a 1-form satisfying $\phi^2 =  -{\rm Id} + \xi \otimes \eta$, $\eta(\xi ) = 1$, and  $\phi(\xi ) = 0$.  
\end{definition}

The vector field $\xi$ defines the characteristic foliation $ F_\xi$ with one dimensional leaves and the kernel of $\eta$ defines the co-dimension one sub-bundle $D = {\rm ker} \  \eta$.  If the one form $\eta$ satisfies $\eta \wedge (d\eta)^n \neq 0$ then it defines a contact structure on $M$ and $\xi$ is the Reeb vector field.  Since $\phi |_D$ satisfies $\phi ]_D^2 = -{\rm Id}$, $D$ decomposes as a direct sum of the $\pm \sqrt {-1}$ eigenbundles, $D = D^{(1,0)} \oplus D^{(0,1)}$.

\begin{definition}Given the almost contact triple $(\phi, \xi, \eta)$,  a metric $g$ is compatible with the almost contact structure if
$g(\phi(X), \phi(Y) )= g(X,Y) - \eta (X) \eta (Y)$ for all section $X, Y$ of $TM$.. 
\end{definition}
 Such metrics always exist and then we say the quadruple $(\phi, \xi, \eta , g)$ defines an almost contact metric structure on $M$.  If $\eta $ defines a contact structure, we call it a contact metric structure.

\begin{definition}A contact metric structure is called k-contact if $L_\xi \phi = 0$.
\end{definition}

\begin{definition}
A contact metric structure is called normal if the Nienhuis torsion tensor of $\phi$, $N_\phi(X,Y) = [\phi X, \phi Y] + \phi^2 [X,Y] -[\phi X, Y]  - \phi [ \phi X, Y]$ where $X, Y$ are section of $TM$, satisfies $N_\phi = -2\xi \otimes d\eta$.
\end{definition}

\begin{definition}
A normal contact metric manifold is called Sasakian.
\end{definition}

We explore the difference between k-contact and Sasakian structures with more detail in a way that will be useful in what follows.
Given a contact metric structure $(\phi, \xi, \eta , g)$, the condition $N_\phi (X, Y) = -2\xi \otimes d\eta (X,Y)$ for vector fields $X$ and $Y$ on $TM$ reduces to

 $$ (*)\  \  \  \  \  -[X, Y] + X\eta (Y) \xi - Y\eta (X) \xi  = 0$$
Consider a local frame $\lbrack \xi , Z_i , {\bar Z}_i \rbrack$ of $TM \otimes {\mathbb C}$ where $Z_i \in D^{(1,0)}$.  First let $X = \xi$ and $Y = Z_i$.  Then (*) becomes
$$-[\xi , Z_i] - \phi([\xi , \phi(Z_i)]) = 0  .$$

Thus,
$$ \phi([\xi , {\sqrt -1}Z_i)] = -[\xi , Z_i] $$

which implies
$$\phi([\xi , Z_i])] = {\sqrt -1}[\xi , Z_i] .$$
That is, $[\xi , Z_i] \in D^{(1,0)}$.

Observe that (*) is also satisfied if $X  = Z_i$ and $Y= {\bar Z}_i$. (One computes 0=0.)

If we let $X = Z_i$ and $Y = Z_j$, we get that 
$$\phi([Z_i, Z_j]) = {\sqrt -1}[Z_i , Z_j] .$$
That is, $[Z_i, Z_j] \in D^{(1,0)}$.

So the condition $M$ is Sasakian is equivalent to the two conditions  $[\xi , D^{(1,0)}] \subset D^{(1, 0)}$ and $[  D^{(1,0)} ,  D^{(1,0)}] \subset D^{(1,0)}$.  The condition that $M$ is k-contact is equivalent to just the first condition holding.  We see this by expanding the defining relation $L_\xi \phi = 0$ using the definition of the Lie derivative of a tensor. Specifically, if $X \in D^{(1, 0)}$, then $0 = L_\xi \phi (X) = {\sqrt -1}[\xi , X] - \phi([\xi , X])$.

\begin{definition}
 Let $T^{\mathbb C}M = TM\otimes {\mathbb C}$ be the complexified tangent bundle of $M$.  Let $H$ be a $C^\infty$ complex sub-bundle of $T^{\mathbb C}M$ of dimension $l$.  A CR structure is a pair $(M, H)$ such that $H_p \cap {\bar H}_p = \lbrace  0 \rbrace$ and $H$ is involutive.
 \end{definition}

Given  a CR structure $(M, H)$, there exists a unique sub-bundle $D$ of $TM$ such that $D^{\mathbb C} = H \oplus {\bar H}$ and a unique bundle map $J:D \rightarrow D$ such that $J^2 = -{\rm Id}$
and $D^{(1,0)} = H$.  
Consider the case now where $M$ has real dimension 2n+1 and  $H$ has  complex dimension $n$. Consider the space $N_x$ of all 1-forms $\alpha$ such that $D\subset {\rm ker}\   \alpha$.  This defines a real line bundle $N \subset T^*M$.  If $M$ is orientable,  then $N$ admits a nowhere vanishing section $\eta$.  The Levi form is defined by
$$L_\eta (X, Y) = d\eta (X, JY) \  \  \  \   X, Y \in D$$
If $L_\eta$ is non-degenerate, $\eta \wedge (d\eta )^n \neq 0 $ and $\eta $ defines a contact form.  If $L_\eta$ is positive definite as well, we say that the CR structure $(M, H)$ is strictly pseudo-convex.  In this case, using the direct sum decomposition $TM = D \oplus \lbrace \xi \rbrace$, we can extend $L_\eta$ to a metric $g$ on $M$ by setting  $g(\xi ,\xi ) =  1$, $g(\xi ,X) = 0$ for  $X \in D$, and $ g(X, Y)  = L_\eta(X,Y)$ for all $X, Y \in D$.  Also, we can extend $J$ to a tensor $\phi $ on $M$ by $\phi ( \xi ) = 0$ and $\phi (X) = JX \  {\rm for } \  X \in D$.  Thus we see that a strictly pseudo-convex CR manifold carries a contact metric structure.  %The almost contact structure $(M, \phi , \xi , \eta )$ is normal from the integrability of  $H = D^{(1, 0)}$.%Similarly, if we have a contact metric structure $( \phi, \xi , \eta , g)$, it defines an almost CR structure with $D = {\rm ker} \  \eta$ and $J = \phi  |_D$.

Lastly,  we make a note regarding $d\eta$ and the existence of Poisson structures on a strictly pseudo-convex CR manifold $M$.  If $d\eta $ is non-degenerate, then viewing $d\eta$ as a map from $TM$ to $T^*M$, we see that $(d\eta )^{-1}$ is a bivector field .  Since $d^2\eta =  0$,  $(d\eta )^{-1}$ defines a Poisson structure $\pi$. (See \cite {[CFM]} , page 32, Prop 2.18).

\section{k-contact and CR structures as generalized contact structures}\label{B:Back}

\indent    We use the definition of a generalized contact structure given by Sekiya  (see \cite{[S]}). Recall that we have set  $M$ to be a smooth manifold of dimension $2n+1$. Consider the big tangent bundle, $TM\oplus ~T^*M$.  We define a neutral metric on $TM\oplus T^*M$ by$$  \langle X + \alpha  , Y + \beta \rangle =  \frac{1}{2} (\beta (X) + \alpha (Y) )$$ and the Courant bracket by $$[[X+\alpha, Y+ \beta ]] = [X,Y] + {\mathcal L}_X\beta -{\mathcal L}_Y\alpha -\frac{1}{2} d(\iota_X\beta - \iota_Y\alpha)$$ where $X, Y \in TM$ and $\alpha ,\beta  \in T^*M$ . A sub-bundle of $TM\oplus T^*M$ is said to be involutive  or integrable if its sections are closed under the Courant bracket.

\begin{definition} \cite{[S]} A generalized almost contact structure on $M$ is a triple $(\Phi, E_\pm)$ where $\Phi $ is an endomorphism of $TM\oplus T^*M$, and $E_+$ and $E_-$ are sections of $TM\oplus T^*M$ which satisfy
\begin{equation}
\Phi + \Phi^{*}=0
\end{equation}
\begin{equation}\label{phi}
\Phi \circ \Phi = -Id + E_+ \otimes E_- + E_- \otimes E_+
\end{equation}
\begin{equation}\label{sections}
 \langle E_\pm, E_\pm \rangle = 0,  \  \    2\langle E_+, E_-  \rangle = 1.
\end{equation}

\end{definition}
%An easy and immediate consequence of these definitions (see \cite{[S]}) is
%\begin{equation}\label{PhivanishE}
%\Phi(E_{\pm})=0.
%\end{equation}
Now, since $\Phi$ satisfies $\Phi^3 + \Phi =0$, we see that $\Phi$ has $0$ as well as $\pm \sqrt{-1}$ eigenvalues when viewed as an endomorphism of the complexified big tangent bundle $(TM\oplus ~T^*M) ~\otimes { ~\mathbb C}$.  The kernel of $\Phi$ is $L_{E_+} \oplus L_{E_-}$ where $L_{E_\pm}$ is the line bundle spanned by ${E_\pm}$.  Let $E^{(1,0)}$ be the $\sqrt{-1}$ eigenbundle of $\Phi$.  Let $E^{(0,1)}$ be the $-\sqrt{-1}$ eigenbundle. Observe:

$$
E^{(1,0)} = \lbrace X + \alpha - \sqrt{-1}  \Phi (X + \alpha ) |  \langle E_\pm, X + \alpha \rangle = 0 \rbrace
$$

$$
E^{(0,1)} = \lbrace X + \alpha + \sqrt{-1}  \Phi (X + \alpha ) | \langle E_\pm, X + \alpha \rangle = 0 \rbrace .$$

Then the complex vector bundles
$$L^+ = L_{E_+} \oplus E^{(1,0)}$$
and
$$L^- = L_{E_-} \oplus E^{(1,0)}$$
are maximal isotropics.
\begin{definition} \cite{[PW]}
A generalized almost contact structure $(\Phi,E_{\pm})$ is a  generalized contact structure if either $L^{+}$ or $L^{-}$ is closed with respect to the Courant bracket. The generalized
contact structure is strong if both $L^{+}$ and $L^{-}$ are closed with respect to the Courant bracket.
\end{definition}

\begin{definition} \cite{[GT2]}
A  generalized almost contact structure $(M, \Phi,E_{\pm})$ is a normal generalized contact structure if  $\Phi$ is strong and $[[E_+, E_-] ] = 0$.
\end{definition}

\begin{remark}  This definition of normality is motivated by Theorem 1 of \cite{[GT1]} that shows that product of two generalized almost contact spaces $(M_1, \Phi_1, E_{\pm 1})$  and $(M_2, \Phi_2, E_{\pm 2})$ induces a standard generalized almost  complex structure on $M_1 \times M_2 $.  The generalized complex structure is integrable if each $\Phi_i$ is strong and  $[[E_{+i},  E_{-i}]] = 0$. 
\end{remark}

Here are the standard examples:

\begin{example}\label{almost contact example}  \cite{[PW]}
Let $(\phi , \xi, \eta)$ be a normal almost contact structure on a manifold $M^{2n+1}$.  Then we get a generalized almost contact structure by setting
$$ \Phi = \left ( \begin{array}{cc}  \phi & 0 \\ 0 & -\phi^* \end{array} \right ),\  \   E_+ = \xi,\  \   E_-= \eta $$  where $(\phi^*\alpha )(X) = \alpha (\phi (X)), \   X \in TM,\   \alpha \in T^{*}M$. Moreover, $(\Phi, E_\pm)$ is an example of a strong generalized almost contact structure.
\end{example}

\begin{example} \label{contact example} \cite{[PW]}
Let $( M^{2n+1}, \eta )$ be a contact manifold with $\xi $ the corresponding Reeb vector field so that
$$ \iota_\xi d\eta = 0 \  \  \  \eta ( \xi ) = 1.$$
Then $$\rho ( X) := \iota_X d\eta - \eta ( X)\eta$$ is an isomorphism from the tangent bundle to the cotangent bundle.  Define a bivector field by
$$\pi (\alpha , \beta ) := d\eta (\rho^{-1}( \alpha ), \rho^{-1}( \beta )),$$
where $\alpha, \beta \in T^{*}. $
We obtain a generalized almost contact structure by setting
$$ \Phi = \left ( \begin{array}{cc}  0 & \pi \\ d\eta & 0 \end{array} \right ),\  \   E_+ = \eta,\  \   E_-= \xi .$$
In fact, $(\Phi, E_{\pm})$ is an example which is not strong.
\end{example}

\begin{definition}\cite{[G1]}
A  generalized metric $G$ on $M$ is an automorphism of $TM\oplus T^*M$ such that $G^{*}=G$ and $G^{2}=1.$ 
\end{definition}

\begin{definition} \cite{[S]}
A generalized almost contact metric structure is a generalized almost contact structure
$(\Phi , E_{\pm})$ along with a generalized  metric $G$ that satisfies
\begin{equation}\label{compatcond}
-\Phi G \Phi = G - E_+ \otimes E_+ -E_- \otimes E_-.
\end{equation}
\end{definition}

\begin{definition}\cite{[G1]}
Let $B$ be a closed two-form which we view as a map from $T \rightarrow T^{*}$ given by interior product. Then the invertible bundle map
$$e^{B}:= \left ( \begin{array}{cc}  1 & 0 \\ B & 1 \end{array} \right):X+\xi \longmapsto X+\xi + \iota_{X}B$$
is called a B-field transformation.
\end{definition}
In \cite {[GT2]} it was proved that a $B$-field transformation of a normal generalized contact metric structure,
 $(e^B\Phi e^{-B},e^{B}E_{\pm},e^{B}G e^{-B})$, is again a normal contact structure.

\begin{example}
If $M$ is either k-contact or  strictly pseudo-convex CR, we can form the generalized contact metric structure associated to its almost contact metric structure $(\phi , \xi , \eta , g)$ by defining

$$ \Phi = \left ( \begin{array}{cc}  \phi & 0 \\ 0 & -\phi^* \end{array} \right ),\  \   E_+ = \xi,\  \   E_-= \eta , \  \  \   G= \left ( \begin{array}{cc}  0 & g^{-1} \\ g & 0 \end{array} \right )$$  where $(\phi^*\alpha )(X) = \alpha (\phi (X)), \   X \in TM,\   \alpha \in T^{*}M$. 

Note that in both cases $d\eta \neq 0$.  In the case that $M$ is a CR manifold, the involutively of $H = D^{(1,0)}$ implies that the associated generalized contact  structure is normal.
\end{example}

\begin{theorem} \cite {[T1]} A normal generalized contact manifold $(M, \Phi, E_+ , E_-  )$ admits a canonical Poisson structure $\pi_M = \pi_0 + e_+ \wedge e_-$ where $\pi_0$ is the canonical Poisson structure associated to the generalized complex structure $\Phi |_{(E_+ \oplus E_-)^\perp}$ and $e_\pm = pr_{TM}E_\pm$.
\end{theorem}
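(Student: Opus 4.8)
The plan is to take $\pi_M=\pi_0+e_+\wedge e_-$ as defined by the formula and to show it is Poisson by proving that its Schouten--Nijenhuis bracket with itself vanishes, organizing the pieces with the graded Leibniz rule. First I would set up $\pi_0$ carefully. Because $\Phi$ is skew for $\langle\,,\,\rangle$ and $\Phi E_\pm=0$, the sub-bundle $(E_+\oplus E_-)^\perp$ is $\Phi$-invariant, and on it the correction terms $E_+\otimes E_-+E_-\otimes E_+$ of \eqref{phi} vanish, so $\Phi|_{(E_+\oplus E_-)^\perp}$ squares to $-\mathrm{Id}$ and is a generalized almost complex structure whose $\sqrt{-1}$-eigenbundle is $E^{(1,0)}$. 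Strongness of $\Phi$ says $L^+=L_{E_+}\oplus E^{(1,0)}$ and $L^-=L_{E_-}\oplus E^{(1,0)}$ are Courant-closed; since $\langle\,,\,\rangle$ is nondegenerate on $\mathrm{span}(E_+,E_-)$, that span meets $(E_+\oplus E_-)^\perp\supseteq E^{(1,0)}$ trivially, which forces $L^+\cap L^-=E^{(1,0)}$, so $E^{(1,0)}$ is itself Courant-closed and $\Phi|_{(E_+\oplus E_-)^\perp}$ is integrable. Its canonical Poisson bivector can be written on $M$ as $\pi_0^{\sharp}=pr_{TM}\circ\Phi|_{T^*M}$ (this depends only on $\Phi|_{(E_+\oplus E_-)^\perp}$ because $\Phi$ annihilates $E_\pm$), and the standard generalized-complex computation, now driven by involutivity of $E^{(1,0)}$, gives $[\pi_0,\pi_0]=0$.

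Next, by bilinearity and graded symmetry of the Schouten bracket,
$$[\pi_M,\pi_M]=[\pi_0,\pi_0]+2\,[\pi_0,e_+\wedge e_-]+[e_+\wedge e_-,e_+\wedge e_-],$$
so after the first paragraph it remains to kill the last two summands. For the third summand I would use that the $TM$-component of $[[X+\alpha,Y+\beta]]$ is $[X,Y]$, whence $[[E_+,E_-]]=0$ forces $[e_+,e_-]=0$; two applications of the Leibniz rule then express $[e_+\wedge e_-,e_+\wedge e_-]$ as a sum of terms each carrying a factor $[e_\pm,e_\pm]=0$ or $[e_+,e_-]=0$, so it vanishes. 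For the cross term, the Leibniz rule gives $[\pi_0,e_+\wedge e_-]=-(\mathcal L_{e_+}\pi_0)\wedge e_-+e_+\wedge(\mathcal L_{e_-}\pi_0)$, which is zero as soon as $\mathcal L_{e_+}\pi_0=\mathcal L_{e_-}\pi_0=0$, i.e.\ as soon as $e_+$ and $e_-$ are infinitesimal automorphisms of $\pi_0$. Granting this, the three vanishings give $[\pi_M,\pi_M]=0$, and the $B$-field naturality of normality and of $\pi_0,\,e_\pm$ recorded in \cite{[GT2]} shows $\pi_M$ is independent of the chosen representatives, which justifies calling it canonical.

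The step I expect to be the main obstacle is $\mathcal L_{e_\pm}\pi_0=0$, where strongness must be used a second time. The idea is that Courant-closedness of $L^+$ forces $[[E_+,s]]\in L^+$ for every section $s$ of $E^{(1,0)}$ (and similarly for $E_-$ and $L^-$), and extracting the $TM$- and $T^*M$-components of these inclusions should show that the infinitesimal action of $E_+$ on $TM\oplus T^*M$ preserves $E^{(1,0)}$, hence preserves $\Phi|_{(E_+\oplus E_-)^\perp}$ up to a $B$-field transformation; since a $B$-field transformation leaves the $pr_{TM}\circ(\cdot)|_{T^*M}$ block unchanged, this yields $\mathcal L_{e_+}\pi_0=0$, and likewise for $e_-$. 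The delicate points are that $e_\pm$ need not be tangent to the distribution $pr_{TM}(E_+\oplus E_-)^\perp$ on which $\pi_0$ is supported, so the invariance must be tracked on all of $TM\oplus T^*M$ rather than on a reduced Courant algebroid, and that $(E_+\oplus E_-)^\perp$ is not literally the Courant algebroid of a manifold, so both the construction of $\pi_0$ as a Poisson bivector on $M$ and the ``standard computation'' giving $[\pi_0,\pi_0]=0$ have to be carried out by hand in that setting rather than simply quoted. Everything else — the Leibniz expansions and the deduction $[e_+,e_-]=0$ — is routine.
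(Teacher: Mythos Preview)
The paper does not prove this theorem at all; it is quoted from \cite{[T1]} without argument, so there is no in-paper proof to compare against. Your Schouten--Nijenhuis strategy is the natural one, and the bookkeeping (the Leibniz expansions, the deduction $[e_+,e_-]=0$ from the $TM$-component of $[[E_+,E_-]]=0$, and the observation $L^+\cap L^-=E^{(1,0)}$ giving Courant-involutivity of $E^{(1,0)}$) is correct.

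There is, however, a concrete problem with your identification $\pi_0^{\sharp}=pr_{TM}\circ\Phi|_{T^*M}$. That block is invariant under any $B$-field transform (since $e^{B}$ is block lower-triangular), and $e_\pm=pr_{TM}E_\pm$ is $B$-invariant as well; hence your $\pi_M$ would be $B$-invariant, which is exactly what you assert in your last sentence. But compare this with how the paper actually \emph{uses} the theorem in Section~5: starting from the diagonal $\Phi$ on a strictly pseudo-convex CR manifold (upper-right block zero, so your $\pi_0=0$, and $e_-=pr_{TM}\eta=0$, so your $\pi_M=0$), the author applies the $B$-transform with $B=d\eta$ and asserts that the canonical Poisson structure of the transformed structure is $(d\eta)^{-1}$, not zero. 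Since $\iota_\xi d\eta=0$, the sections $E_\pm$ do not move under this $B$-transform, so neither does your $\pi_0$ nor your $e_+\wedge e_-$. Whatever construction \cite{[T1]} carries out for $\pi_0$, it therefore cannot simply be the ambient upper-right block of $\Phi$; more likely it uses an internal splitting of $(E_+\oplus E_-)^\perp$ into ``tangent'' and ``cotangent'' halves determined by $E_\pm$, which genuinely depends on the $T^*M$-components of $E_\pm$ and hence shifts under $B$-transforms. You need to consult \cite{[T1]} and pin down the actual definition of $\pi_0$ before the computation; once that is done, the invariance argument for $\mathcal L_{e_\pm}\pi_0=0$ (which you correctly flag as the crux) may also change character.
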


\begin{definition} \cite{[T1]}
A normal generalized contact metric space $(M, \Phi , E_+ , E_- , G)$ is defined to be generalized Sasakian if $ (I + e^t(d\eta) \pi_M)$ is invertible as a map from $T^*M$ to $T^*M$  for all values of
 $t \in  \mathbb{R}$ where  $\pi_M$ is the canonical Poisson structure on $M$, $\eta = pr_{T^*M}E_+ + pr_{T^*M}E_-$ and $d\eta \neq 0$.
\end{definition}

%\begin{definition} \cite{[G1]}
%A generalized  almost complex structure on $M$ is an endomorphism $\mathcal J$ of $TM\oplus T^*M$ such that $\mathcal J + \mathcal J^* =  0 $ and $\mathcal J^2 = - Id$. If the $\sqrt{-1}$ eigenbundle $L\subset (TM\oplus TM^{*})\otimes\mathbb{C}$ associated to $\mathcal J$ is involutive with respect to the
%Courant bracket, then $\mathcal J$ is called a generalized complex structure.
%\end{definition}

%\begin{definition}\cite{[G1]}A  generalized K\"ahler structure is a pair of commuting generalized complex structures $\mathcal J_{1}, \mathcal J_{2}$ such that
%$G=-\mathcal J_{1}\mathcal J_{2}$ is a positive definite metric on $T\oplus T^{*}.$
%\end{definition}

The following theorem was proved in \cite{ [T1]}

\begin{theorem}
Let  $(M_1, \Phi , E_{+1} , E_{- 1}, G_1)$ be a generalized Sasakian space with canonical Poisson structure $\pi_{M_1}$.  
Let  $(M_2, \Phi , E_{+2} , E_{- 2}, G_2)$ be a generalized Sasakian space with canonical Poisson structure $\pi_{M_2 } = 0$.
Then $M_1 \times M_2$ is generalized K\"ahler.

\end{theorem}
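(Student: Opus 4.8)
The plan is to equip $P:=M_1\times M_2$ with a pair of commuting generalized complex structures $J_1,J_2$ for which $-J_1J_2$ is a positive definite generalized metric, which is what it means for $P$ to be generalized K\"ahler; following the recipe indicated in the statement, $J_1$ will be a product-type structure and $J_2$ will be obtained from it by a change of gauge. First I would construct $J_1$. Since each $(M_i,\Phi_i,E_{\pm i})$ is generalized Sasakian it is, by definition, a normal generalized contact structure, so each $\Phi_i$ is strong and $[[E_{+i},E_{-i}]]=0$; the product construction of \cite{[GT1]} (recalled in the Remark above) then assembles the two factors into a generalized almost complex structure $J_1:=\Phi_1\times\Phi_2$ on $TP\oplus T^*P$, and the two normality conditions are exactly what make $J_1$ integrable. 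Being integrable, $J_1$ carries a canonical Poisson bivector $Q$; unwinding the product construction and using $\pi_{M_i}=\pi_{0,i}+e_{+i}\wedge e_{-i}$, one finds that $Q$ is assembled from $\pi_{M_1}$, $\pi_{M_2}$ and the projected fields $e_{\pm i}=pr_{TM_i}E_{\pm i}$, and since $\pi_{M_2}=0$ this reduces essentially to $Q=\pi_{M_1}$, viewed as a bivector on $P$.

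Next I would produce $J_2$ by a change of gauge of $Q$. Set $\eta:=pr_{T^*P}(E_{+1}+E_{-1}+E_{+2}+E_{-2})$, so that $d\eta$ is a closed $2$-form on $P$ with $d\eta\neq0$ (because $d\eta_1\neq0$). For $t\in\mathbb{R}$, the gauge transformation of the Poisson structure by the closed $2$-form $e^{t}d\eta$ replaces $Q$ by $Q\,(I+e^{t}(d\eta)Q)^{-1}$; the generalized Sasakian hypothesis on $M_1$ is precisely the statement that $I+e^{t}(d\eta)\pi_{M_1}$ is invertible on $T^*P$ for all $t\in\mathbb{R}$ (the $M_2$ contribution being automatic since $\pi_{M_2}=0$), so this family of gauge transformations is everywhere defined. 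Carrying the gauge change over to the Dirac structure underlying $J_1$ produces a second generalized almost complex structure $J_2=J_2(t)$ on $P$, and since gauge ($B$-field) transformations by closed forms preserve integrability, $J_2$ is a generalized complex structure; I would then fix one convenient value of $t$.

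It remains to check that $[J_1,J_2]=0$ and that $-J_1J_2$ is a positive definite generalized metric. The commutation should follow from the fact that on the product the generalized metric $G:=G_1\times G_2$ is compatible with $J_1$ — the $E$-correction terms of the two factors in the identity $-\Phi G\Phi=G-E_+\otimes E_+-E_-\otimes E_-$ cancel on $P$ — together with the observation that the $B$-field implementing the gauge change acts only in directions along which this compatibility is undisturbed. Positivity of $-J_1J_2$ would then reduce, via the generalized metrics $G_1,G_2$ and the compatibility of the almost contact metric data $(\phi_i,\xi_i,\eta_i,g_i)$, to a pointwise linear-algebra check on the product metric along the contact distributions and the transverse direction. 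The hard part will be the middle step: showing that the gauge-modified object is an \emph{honest, integrable} generalized complex structure — this is exactly where invertibility of $I+e^{t}(d\eta)\pi_{M_1}$, i.e.\ the generalized Sasakian condition, cannot be avoided — and that the repair making $J_2$ integrable is compatible with its commuting with $J_1$; once that is settled, the rest is bookkeeping in a local frame adapted to $D^{(1,0)}$, $\xi$, and the $\mathbb{R}$-type direction.
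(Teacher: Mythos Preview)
This theorem is not proved in the present paper: it is stated as background and explicitly attributed to \cite{[T1]} (``The following theorem was proved in \cite{[T1]}''). There is therefore no in-paper proof to compare your proposal against.

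That said, your outline matches the strategy the Introduction ascribes to \cite{[T1]}: build $J_1=\Phi_1\times\Phi_2$ via the product construction of \cite{[GT1]}, then obtain $J_2$ by a gauge change of the Poisson structure underlying $J_1$, with the generalized Sasakian invertibility condition ensuring the gauge transform is defined. Where your sketch is thin is exactly where you flag it: you have not actually produced $J_2$ as a concrete endomorphism (a gauge transform of a Poisson bivector is again a Poisson bivector, not \emph{a priori} a generalized complex structure, so one must say which Dirac/generalized complex structure it underlies), nor have you verified commutation with $J_1$ or positive-definiteness of $-J_1J_2$; these are the substantive steps carried out in \cite{[T1]}, and your proposal defers them rather than executes them.
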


\section{K-contact structures are Generalized Sasakian if and only if they are Sasakian}

\begin{theorem}
A k-contact manifold is generalized Sasakian if and only if it is Sasakian.
\end{theorem}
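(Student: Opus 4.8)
The plan is to exploit the equivalence, established in Section 2, between the Sasakian and $k$-contact conditions in terms of brackets of the eigenbundles $D^{(1,0)}$. Recall a $k$-contact structure satisfies $[\xi, D^{(1,0)}] \subset D^{(1,0)}$ while Sasakian requires in addition $[D^{(1,0)}, D^{(1,0)}] \subset D^{(1,0)}$. The backward implication is immediate: a classically Sasakian manifold is normal, hence (by the discussion following Example~2.16) yields a normal generalized contact metric structure, and the main construction of \cite{[T1]} shows such manifolds are generalized Sasakian (indeed this is asserted in the introduction). So the content is the forward direction: a $k$-contact manifold that is generalized Sasakian must be Sasakian.

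For the forward direction I would argue by contraposition. Suppose $M$ is $k$-contact but \emph{not} Sasakian. The first observation is that the generalized contact metric structure $(\Phi, E_+, E_-, G)$ attached to $(\phi,\xi,\eta,g)$ must be normal in order for the generalized Sasakian condition even to be stated; normality of $\Phi$ forces $L^\pm$ to be Courant-closed. I would unwind what Courant-closedness of $L^+ = L_{E_+}\oplus E^{(1,0)}$ says here. With $E_+ = \xi$, $E_- = \eta$, and $\Phi = \mathrm{diag}(\phi, -\phi^*)$, the bundle $E^{(1,0)}$ is essentially $D^{(1,0)} \oplus (D^{(0,1)})^\ast$ (the pointwise $\sqrt{-1}$-eigenspace of $\Phi$ on the annihilator of $\xi,\eta$), and the Courant bracket of two such sections, projected to $TM$, reproduces the ordinary Lie bracket on $D^{(1,0)}$. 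Thus normality of $\Phi$ already encodes $[D^{(1,0)},D^{(1,0)}]\subset D^{(1,0)}$ together with $[\xi,D^{(1,0)}]\subset D^{(1,0)}$ — i.e. normality of $\Phi$ is exactly the Sasakian condition. If that is the case, the theorem is almost tautological: a $k$-contact $M$ admits a normal generalized contact structure of this type iff it is Sasakian, and the generalized Sasakian hypothesis presupposes normality.

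However, I suspect the intended argument is more delicate, because one should allow $B$-field transforms and more general normal structures, not just the diagonal one. So the real plan is: (1) show that for a $k$-contact manifold the \emph{only} candidate normal generalized contact metric structures compatible with $(\phi,\xi,\eta,g)$ are (gauge transforms of) the diagonal one of Example~2.16; (2) compute the canonical Poisson structure $\pi_M = \pi_0 + e_+\wedge e_-$ from Theorem~2.17 in this case, noting $e_+ = \xi$, $e_- = 0$ (since $E_- = \eta$ is a pure form), so $e_+\wedge e_- = 0$ and $\pi_M = \pi_0$; (3) analyze invertibility of $I + e^t\, d\eta\, \pi_M : T^\ast M \to T^\ast M$. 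The operator $d\eta\,\pi_0$ is a well-defined endomorphism of $T^\ast M$; on the contact distribution $D$ it acts, via the identification $\rho$, essentially as $-\mathrm{Id}$ (this is the standard relation between $d\eta$ and the induced Poisson bivector, cf.\ Example~2.10 and the CR remark at the end of Section~2), so $I + e^t d\eta\,\pi_0$ fails to be invertible at $t=0$ \emph{unless} $\pi_0$ has been modified by a gauge change — and such a modification is available precisely when the extra bracket condition $[D^{(1,0)},D^{(1,0)}]\subset D^{(1,0)}$ holds, i.e.\ when $M$ is Sasakian. Running this carefully, the non-Sasakian $k$-contact case produces a value of $t$ (likely $t=0$) at which $I + e^t d\eta\,\pi_M$ is singular, contradicting the generalized Sasakian hypothesis.

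The main obstacle, and the step I would spend the most care on, is step~(3): pinning down exactly what endomorphism of $T^\ast M$ the composite $d\eta\,\pi_M$ is for a $k$-contact (vs.\ Sasakian) manifold, and exhibiting the precise $t$ at which invertibility fails in the non-Sasakian case. This requires carefully tracking the difference between $\pi_0$ coming from the integrable generalized complex structure $\Phi|_{(E_+\oplus E_-)^\perp}$ (which only exists/behaves well when the CR integrability $[D^{(1,0)},D^{(1,0)}]\subset D^{(1,0)}$ holds) and what one is forced to write down when that integrability fails. Essentially the obstruction to generalized Sasakianness \emph{is} the obstruction to CR-integrability of $D^{(1,0)}$, and making that identification rigorous — rather than heuristic — is the crux.
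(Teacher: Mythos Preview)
Your first instinct was correct, and it is precisely the paper's proof. The paper argues the forward direction exactly as you sketch in your second paragraph: generalized Sasakian presupposes that the associated generalized contact metric structure $(\Phi,E_+,E_-,G)$ is normal (in particular strong), and strongness means $[[E^{(1,0)},E^{(1,0)}]]\subset E^{(1,0)}$. Projecting to $TM$ this yields $[D^{(1,0)},D^{(1,0)}]\subset D^{(1,0)}$; the $k$-contact hypothesis supplies $[\xi,D^{(1,0)}]\subset D^{(1,0)}$; together these are the Sasakian condition from Section~2. That is the whole argument. Your remark that the theorem is ``almost tautological'' is accurate --- the invertibility clause in Definition~3.12 is never invoked, only the normality prerequisite.

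Where you go astray is in abandoning this for the elaborate programme in steps (1)--(3). That route has a structural problem: the canonical Poisson structure $\pi_M=\pi_0+e_+\wedge e_-$ of Theorem~3.11 is only defined for a \emph{normal} generalized contact structure, since $\pi_0$ comes from the integrable generalized complex structure $\Phi|_{(E_+\oplus E_-)^\perp}$. If the underlying $k$-contact manifold fails $[D^{(1,0)},D^{(1,0)}]\subset D^{(1,0)}$, the diagonal $\Phi$ is not strong, the structure is not normal, and there is no $\pi_M$ to feed into the invertibility test. So the obstruction sits at the normality stage, not at the invertibility stage; your step~(3) attempts to locate a singular value of $t$ for an operator that does not exist in the non-Sasakian case. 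Your worry about $B$-field transforms is also misplaced here: the paper simply takes the structure associated to the given $k$-contact data (allowing the mild generality $E_+=\xi+\eta_+$, $E_-=\eta_-$ with $\eta=\eta_++\eta_-$) and observes that $E^{(1,0)}=D^{(1,0)}\oplus (D^{(1,0)})^*$, so Courant closure already forces the missing bracket condition.
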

\begin{proof}
Let $(M, \phi, \eta, \xi, g )$ be a k-contact manifold. If $M$ is generalized Sasakian then it has a normal generalized contact  metric structure associated to it of the form 
$$ (M, \  \  \Phi ,\  \   E_+ = \xi  + \eta_+ ,\  \   E_-  =  \eta_-, \  \  G ).$$ where $\eta = \eta_+ + \eta_-$.
%$$ \Phi = \left ( \begin{array}{cc}  \phi & 0 \\ 0 & -\phi^* \end{array} \right ),\  \   E_+ = \xi,\  \   E_-= \eta ,\  \  G= \left ( \begin{array}{cc}  0 & g^{-1} \\ g & 0 \end{array} \right )$$  is strong.
Let $D$ = ker $\eta$ and $D \otimes {\mathbb C} = D^{(1,0)} \oplus D^{(0,1)}$.
Then $E^{(1,0)} = D^{(1,0)} \oplus (D^{(1, 0)})^*$ and $E^{(0,1)} = D^{(0, 1)} \oplus (D^{(0,1)})^*$.  Since $(M, \phi, \eta, \xi, g )$ is k-contact, $[ \xi, D^{(1,0)}] \subset D^{(1,0)}$.  Since $ (M, \Phi, E_+, E_- , G)$ is strong,
$[[ E^{(1,0)}, E^{(1,0)}]] \subset E^{(1,0)}$ which implies $[D^{(1,0)}, D^{(1,0)}] \subset D^{(1,0)}$ as well.  Hence $(M, \phi, \eta, \xi, g )$  is Sasakian.

The reverse direction is straightforward. If $M$ is Sasakian it is both k-contact and generalized Sasakian.

\end{proof}

\section{Strictly Pseudo-convex  CR structures are Generalized Sasakian}
\begin{theorem}
Let M have a strictly pseudo-convex  CR structure.  Then M is a generalized Sasakian manifold.
\end{theorem}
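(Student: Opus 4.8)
The plan is to verify the defining condition of generalized Sasakian directly from the data we already have on a strictly pseudo-convex CR manifold $M$. Recall that such an $M$ carries the contact metric structure $(\phi,\xi,\eta,g)$ described in Section 2, and that the associated generalized contact metric structure $(\Phi, E_+=\xi, E_-=\eta, G)$ is normal because $H=D^{(1,0)}$ is involutive. By the theorem of \cite{[T1]} quoted above, $M$ then has a canonical Poisson structure $\pi_M = \pi_0 + e_+\wedge e_-$, where $e_+ = \xi$, $e_- = 0$ (since $E_-=\eta$ has no tangent part), and $\pi_0$ is the canonical Poisson structure of the induced generalized complex structure on $(E_+\oplus E_-)^\perp$. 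So in fact $\pi_M = \pi_0$, and everything reduces to understanding $\pi_0$ and the map $I + e^t(d\eta)\pi_0 : T^*M \to T^*M$.

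First I would identify $\pi_0$ explicitly. The generalized contact structure here comes from the ``almost contact'' Example \ref{almost contact example} form, $\Phi = \mathrm{diag}(\phi, -\phi^*)$, so the restriction of $\Phi$ to $(E_+\oplus E_-)^\perp$ is the generalized complex structure of complex type coming from the integrable $J=\phi|_D$; its canonical Poisson structure is the bivector built from the (non)symmetric part of the relevant data, and because $\Phi$ is block diagonal with no $T\to T^*$ block, $\pi_0 = 0$. (If a $B$-field has been applied one tracks it through, but the pure form has vanishing Poisson bivector.) Hence $\pi_M = 0$, and then $I + e^t(d\eta)\pi_M = I$ for every $t$, which is trivially invertible. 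This is the conceptual heart of the argument: for the CR-induced structure the canonical Poisson tensor degenerates to zero, so the invertibility condition is automatic.

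The step I expect to be the main obstacle is pinning down $\pi_0$ rigorously — i.e.\ showing that the canonical Poisson structure of the generalized complex structure $\Phi|_{(E_+\oplus E_-)^\perp}$ genuinely vanishes in this block-diagonal situation, and handling the bookkeeping of which projections ($pr_{TM}$, $pr_{T^*M}$) enter $\pi_0 = \pi_0(\Phi)$. Concretely one writes $\pi_0(\alpha,\beta) = \langle \Phi\,\alpha, \beta\rangle$ for $\alpha,\beta\in T^*M$ restricted to $D^*$, and observes that $\Phi$ sends $T^*M$ into $T^*M$ (the lower-right block $-\phi^*$), so $\Phi\alpha$ pairs to zero against $\beta\in T^*M$ under the neutral metric, giving $\pi_0\equiv 0$. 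Once that is established, the conclusion $I + e^t(d\eta)\pi_M = I$ is immediate, and $M$ satisfies the definition of generalized Sasakian for all $t\in\mathbb{R}$, completing the proof. I would also remark that this explains structurally why CR manifolds, unlike k-contact ones, are always generalized Sasakian even though they need not be classically Sasakian: the obstruction measured by $d\eta\,\pi_M$ is vacuous when $\pi_M=0$.
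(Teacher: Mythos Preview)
Your argument is correct and more direct than the paper's. Both you and the paper observe that for the diagonal structure $(\Phi,E_+=\xi,E_-=\eta,G)$ the restriction $\Phi|_{(E_+\oplus E_-)^\perp}$ is a generalized complex structure of complex type, so $\pi_0=0$; since $e_-=pr_{TM}\eta=0$ this gives $\pi_M=0$. You stop there: $I+e^t(d\eta)\cdot 0=I$ is invertible for every $t$, $d\eta\neq 0$ by strict pseudo-convexity, and normality has already been recorded in the CR example of Section~3, so Definition~3.12 is met. The paper instead continues by $B$-transforming with $B=d\eta$ to produce a second normal generalized contact metric structure whose canonical Poisson tensor it identifies (via \cite{[AB]}) with $\pi=(d\eta)^{-1}$, and then checks that $I+e^t(d\eta)\pi=(1+e^t)I$ is invertible. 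Your route buys economy; the paper's route buys a generalized Sasakian structure with $\pi_M\neq 0$, which is what one actually wants for the concluding sentence about $M\times\mathbb{R}$ admitting a generalized K\"ahler structure, since the gauge-change construction of $J_2$ from $J_1$ described in the Introduction degenerates when $\pi_M=0$. One small caveat on your closing remark: the reason a non-Sasakian $k$-contact manifold fails to be generalized Sasakian is not the invertibility condition but normality itself---$D^{(1,0)}$ need not be involutive, so the associated generalized contact structure is not strong (see the proof in Section~4).
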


\begin{proof}
Given a strictly pseudo-convex CR structure on $M$, construct the almost contact metric structure $(M, \phi, \xi , \eta, g)$ associated to it.  Note that since the  CR structure is strictly pseudo-convex, $d\eta$ is non-degenerate and $(d\eta)^{-1}$ defines a Poisson structure $\pi$. From the almost contact metric structure, construct the normal generalized contact metric structure structure 
$$ \Phi = \left ( \begin{array}{cc}  \phi & 0 \\ 0 & -\phi^* \end{array} \right ),\  \   E_+ = \xi,\  \   E_-= \eta ,\   \   G = \left ( \begin{array}{cc}  0 & g^{-1} \\ g & 0 \end{array} \right ).$$  The generalized complex structure on $ (E_+  \oplus E_- )^\perp$ obtained by restricting $\Phi$ is a generalized complex structure that arises from a classical complex structure. Hence, the corresponding canonical poisson structure associated to the generalized contact metric structure is the zero Poisson structure. (See \cite {[T1]} for the construction of the generalized complex structure obtained by restricting $\Phi$ and the canonical Poisson structure associated to the generalized contact structure.  See  \cite {[AB]} for details about the canonical Poisson structure associated to a generalized complex structure that arises from a classical complex structure.)  However, the B transform of this generalized contact metric structure by $B = d\eta$, 
$$ \Phi^\prime =exp (B)\Phi  exp (-B ),\  \  E_+^\prime = exp(B) E_+ = \xi +\xi  \lrcorner B ,$$ $$ E_-^\prime = exp (B) E_-= \eta ,\   \  G^\prime =  exp(B) G  exp(-B)  $$
is also a normal generalized contact metric structure.  The B transform shifts the canonical symplectic foliation associated to the original generalized complex structure on $ (E_+  \oplus E_- )^\perp$ (see \cite {[AB]}) so that
 $\pi  +\xi \wedge 0$ is the canonical Poisson structure on $M$ associated with this B transformed generalized contact structure.  Since $(I + e^t(d\eta)\pi) = (1+e^t) I$ is invertible for all $t$, $M$ is generalized Sasakian and, hence, $M \times {\mathbb R}$ admits a generalized K\"ahler structure.
\end{proof}

\begin{remark} Since there are strictly pseudo-convex CR manifolds that are not classically Sasakian, this theorem provides a way of generating examples of generalized Sasakian manifolds that are not classically Sasakian.
\end{remark}

\end{document}